\documentclass[11pt]{amsart}
\usepackage{color}
\usepackage[utf8]{inputenc}
\usepackage{amsmath,amsthm,amssymb,amscd}
\usepackage[all,cmtip]{xy}
\usepackage{booktabs} 
\usepackage[hyperfootnotes=false, colorlinks, linkcolor={blue}, citecolor={magenta}, filecolor={blue}, urlcolor={blue}, plainpages=false, pdfpagelabels]{hyperref}
\usepackage[overload]{textcase} 
\usepackage{tikz}
\usetikzlibrary{decorations.pathmorphing}
\usepackage[export]{adjustbox} 
\usepackage{tikz-cd}
\usepackage{mathtools}
\usepackage[numbers,sort&compress]{natbib} 
\usepackage[left=3cm, right=3cm, top=3.5cm, bottom=3cm]{geometry}
\usepackage{enumitem}  

\usepackage{float}
\usepackage{longtable} 
\usepackage[ampersand]{easylist}

\usepackage{comment}
\usepackage{ltablex,booktabs}
\usepackage{amsthm}
\usepackage{empheq}
\allowdisplaybreaks
\nonfrenchspacing
\newcommand{\A}{{\mathbb A}}
\newcommand{\Q}{{\mathbb Q}}
\newcommand{\Z}{{\mathbb Z}}

\newcommand{\C}{{\mathbb C}}
\newcommand{\N}{{\mathbb N}}

\newcommand{\GL}{{\rm GL}}

\newcommand{\SL}{{\rm SL}}

\newcommand{\Sp}{{\rm Sp}}
\newcommand{\GSp}{{\rm GSp}}
\newcommand{\PGSp}{{\rm PGSp}}

\newcommand{\Kl}{\Gamma_0'}

\newcommand{\Hh}{{\mathcal H}}

\newcommand{\forget}[1]{}

\def\qdots{\mathinner{\mkern1mu\raise0pt\vbox{\kern7pt\hbox{.}}\mkern2mu
		\raise3.4pt\hbox{.}\mkern2mu\raise7pt\hbox{.}\mkern1mu}}

\usepackage{colonequals}
\newtheorem{lemma}{Lemma}[section]
\newtheorem{theorem}[lemma]{Theorem}
\newtheorem{corollary}[lemma]{Corollary}
\newtheorem{proposition}[lemma]{Proposition}
\newtheorem{definition}[lemma]{Definition}

\title[Siegel cusp forms and $4$-core partitions]{Congruences for dimensions of spaces of Siegel cusp forms and $4$-core partitions}
\author{Chiranjit Ray}
\address{Department of Mathematics, Harish-Chandra Research Institute, U.P. 211019, INDIA.}
\curraddr{}
\email{\tt chiranjitray.m@gmail.com}
\author{Manami Roy}
\address{Department of Mathematics, Fordham University, Bronx, New York 10458, USA.}
\curraddr{}
\email{\tt manami.roy.90@gmail.com}
\author{Shaoyun Yi}
\address{Department of Mathematics, University of South Carolina, Columbia, SC 29208, USA.}
\curraddr{}
\email{\tt yishaoyun926@gmail.com}

\subjclass[2010]{Primary: 11F46, 11P83.}
\keywords{Siegel cusp forms; $4$-core partition.}
\begin{document}
	\begin{abstract}
		Using the relationship between Siegel cusp forms of degree $2$ and cuspidal automorphic representations of $\GSp(4,\A_{\Q})$, we derive some congruences involving dimensions of spaces of Siegel cusp forms of degree $2$ and the class number of $\Q(\sqrt{-p})$. We also obtain some congruences between the $4$-core partition function $c_4(n)$ and dimensions of spaces of Siegel cusp forms of degree $2$.
	\end{abstract}
	\maketitle
	\section{Introduction}
	There is a well known connection between Siegel modular forms of degree $2$ and automorphic representations of the adelic group $\GSp(4,\A_{\Q})$; for more details see \cite{AsgariSchmidt2001, Schmidt2017}.  Let $S_{k}(\Gamma_N)$ be the space of Siegel cusp forms of degree $2$, weight $k$, and level $N$ with respect to a congruence subgroup $\Gamma_N$ of $\Sp(4,\Q)$. Here we consider the following congruence subgroups: the full modular group $\Sp(4,\Z)$; the paramodular group ${\rm K}(N)$ of level $N$, the Klingen congruence subgroup $\Kl(N)$ of level $N$, and the Siegel congruence subgroup $\Gamma_0(N)$ of level $N$
	defined as follows, respectively.
		\begin{equation}\label{classical congrunece subgroups}
		\begin{split}
		&{\rm K}(N)=\begin{bsmallmatrix} \Z&N\Z&\Z&\Z\\ \Z&\Z&\Z&N^{-1}\Z\\ \Z&N\Z&\Z&\Z
				\\ N\Z&N\Z&N\Z&\Z\end{bsmallmatrix}\ \cap\  \Sp(4,\Q),\\
			&
			\Kl(N)=\begin{bsmallmatrix} \Z&N\Z&\Z&\Z\\\Z&\Z&\Z&\Z\\ \Z&N\Z&\Z&\Z
				\\ N\Z&N\Z&N\Z&\Z\end{bsmallmatrix} \ \cap\  \Sp(4,\Z),\\
			&
			\Gamma_0(N)=\begin{bsmallmatrix} \Z&\Z&\Z&\Z\\\Z&\Z&\Z&\Z\\ N\Z&N\Z&\Z&\Z
				\\ N\Z&N\Z&\Z&\Z\end{bsmallmatrix} \ \cap\  \Sp(4,\Z).\\
		\end{split}
	\end{equation}

	\begin{definition}\label{Definition of S k p}
			Let $k\ge 3$ be a positive integer, and let $p$ be a prime. 
			Let $S_k(p,\Omega)$ be the set of cuspidal automorphic representations $\pi\cong\otimes_{v \le \infty} \pi_v$ of $\GSp(4,\A_{\Q})$ with trivial central character satisfying the following properties:
		\begin{enumerate}
			\item[\rm{(i)}]   $\pi_{\infty}$ is the lowest weight module with minimal $K$-type $(k,k)$; it is a holomorphic discrete series representation.
			\item[\rm{(ii)}]     $\pi_{v}$ is unramified for each $v\neq p, \infty$.
			\item[\rm{(iii)}]    $\pi_{p}$ is an Iwahori-spherical representation of $\GSp(4,\Q_p)$ of type $\Omega$.
		\end{enumerate}
We denote $s_k(p,\Omega)\colonequals \# S_k(p,\Omega)$.
	\end{definition}
	Here the representation type $\Omega$ is one of the types listed in \cite[Table 3]{Schmidt200501}: 	I, IIa, IIb, IIIa, \ldots, VId.
	The dimensions of spaces $S_k(\Gamma_p)$ of Siegel cusp forms of degree $2$ and level $p$ with respect to the congruence subgroups in \eqref{classical congrunece subgroups} are connected to the numbers $s_k(p,\Omega)$; see \cite{RoySchmidtYi2021}. 	We discuss this relationship briefly in Section~\ref{Preliminaries}. 
	In this article, we obtain some congruences between dimensions of spaces $S_k(\Gamma_p)$ of Siegel cusp forms and class numbers of imaginary quadratic fields $\Q(\sqrt{-p})$  utilizing the explicit formulas between dimensions of  $S_k(\Gamma_p)$ and the quantities $s_k(p,\Omega)$.

	The connection between class numbers of imaginary quadratic fields and dimensions of spaces of cusp forms may seem surprising, but it has been studied before. In \cite{Wakatsuki2014}, Wakatasuki proved some congruences between class number  of imaginary quadratic fields and dimensions of spaces of elliptic cusp forms. The congruence for elliptic cusp forms case follows from Yamauchi's formula \cite{Yamauchi1973} for traces of Atkin–Lehner involutions involving the class number of $\Q(\sqrt{-p})$. In the same paper, Wakatsuki also obtained some congruences between class number of imaginary quadratic fields $\Q(\sqrt{-p})$ and dimensions of spaces of vector-valued Siegel cusp forms. 
	Here, we prove some congruences for scalar-valued Siegel cusp forms in Section~\ref{main results}; to be more precise, see Theorem~\ref{Proposition of IIIa/VI p geq 5} and Theorem~\ref{Proposition of Va p geq 5}.
	
	The class number of imaginary quadratic fields appears in the study of certain partition functions, like $t$-core partitions. For positive integers $t$, we let $c_t(n)$ denote the number of $t$-core partitions of $n$. The arithmetic properties of $c_t(n)$ has been of interest in combinatorial number theory and representation theory; for example see \cite{Garvan1990, Hirschhorn1996JNT}. 
	The $4$-core partitions arise naturally in the modular representation theory of finite general linear groups. In \cite{Ono1997}, Ono and Sze studied $4$-cores partitions using Gauss' theory of class numbers. In Section~\ref{Section 4core}, we discuss some new congruences on $4$-core partitions. By the formula that connects class numbers of imaginary quadratic fields and $4$-core partitions, we obtain some congruences between $4$-core partitions and dimensions of spaces of Siegel cusp forms modulo $4$ in Corollary~\ref{Theorem for C4n and Smf}. As a consequence, we prove that for positive integer $n$ such that $8n+5$ is a prime number and for any positive integer $k$,
	\begin{equation}\label{eq in intro}
		\begin{split}
			c_4(n) \equiv \dim_{\C} S_{4k}^{\rm new}(\Gamma_{0}^{(1)}(8n+5)) \pmod{2}.
		\end{split}
	\end{equation}
Here  $S_{4k}^{\rm new}(\Gamma_{0}^{(1)}(8n+5))$ denotes the space of elliptic newforms of weight $k$  and level $8n+5$ with respect to the congruence subgroup $\Gamma_{0}^{(1)}(N)=\left[\begin{smallmatrix}
		\Z&\Z\\N\Z&\Z\\
	\end{smallmatrix}\right]\cap \SL(2,\Z)$.
	One can readily obtain from \eqref{eq in intro} that $c_4(n)$ is odd when $8n+5$ is a prime number; this result is also proven in \cite[Remark 1]{Ono1997} using Gauss's genus theory.
	\section{Preliminaries}\label{Preliminaries}
	In this section we review some basic results about modular forms, Siegel modular forms and the relationship of Siegel modular forms with automorphic representations of $\GSp(4,\A_{\Q})$. The algebraic group $\GSp(4)$ is defined by
	\begin{equation}\label{GSp4 classical version}
		\GSp(4) \colonequals \{g\in\GL(4)\colon \:^tgJg=\lambda(g)J,\:\lambda(g)\in \GL(1)\}, \quad J=\begin{bsmallmatrix} 0&1_2\\-1_2&0\end{bsmallmatrix}.
	\end{equation}
	The function $\lambda$ is called the multiplier homomorphism. The kernel of this function is the symplectic group $\Sp(4)$. Let $Z$ be the center of $\GSp(4)$ and $\PGSp(4)=\GSp(4)/Z$.
	
	Let $\Hh_2$ be the Siegel upper half space of degree $2$, i.e., $\Hh_2$ consists of all symmetric complex $2\times2$ matrices whose imaginary part is positive definite.
	\begin{definition}
			A Siegel modular form $f:\Hh_2\rightarrow \C$ of degree $2$ and weight $k$ with respect to a congruence subgroup $\Gamma_N$ of $\Sp(4,\Q)$ is a holomorphic function satisfying the following transformation property
		\begin{equation*}
			(f|_{k}g)(Z)=\det(CZ+D)^{-k}f((AZ+B)(CZ+D)^{-1})=f(Z)\ \text{for }  g=\begin{bsmallmatrix}A&B\\C&D\end{bsmallmatrix}  \in \Gamma_N.
		\end{equation*}
	\end{definition}
\textcolor{black}{Let  $\Gamma_N$ be one of the congruence subgroups given in \eqref{classical congrunece subgroups}}. We call a Siegel modular form $f$ a \emph{cusp form} if 
\begin{equation*}
	\lim\limits_{\lambda \rightarrow \infty} (f|_{k}g)\left(\begin{bsmallmatrix}i\lambda&\\&\tau\end{bsmallmatrix} \right)=0\quad \text{for all } g \in \Sp(4,\Q), \tau \in \Hh,
\end{equation*}
where $\Hh$ is the complex upper half plane. Now, let $f\in S_k(\Gamma_p)$ be an eigenform. We say that $f$ gives rise to an irreducible representation if its adelization
generates an irreducible cuspidal automorphic representation $\pi\cong\otimes\pi_v$ of $\GSp(4,\A_{\Q})$ (in the sense of \cite[Section~3]{Schmidt2017}). The automorphic representation $\pi$ associated to any such $f$ has trivial central character and hence may be viewed as an automorphic representation of  $\PGSp(4,\A_{\Q})$. Moreover, it follows from \cite[Section~2.1]{RoySchmidtYi2021} that such $\pi\cong\otimes\pi_v$ is an element of $S_k(p,\Omega)$, where $\pi_p$ is an Iwahori-spherical representation of type $\Omega$.  In fact, every eigenform $f\in S_k(\Gamma_p)$ arises from a vector in $\pi_p^{C_p}$. Here $C_p$ is one of the compact open subgroups in $\GSp(4, \Q_p)$ that corresponds to the congruence subgroup $\Gamma_p$ of $\Sp(4,\Q)$ given in \eqref{classical congrunece subgroups}. So, we have the following relationship between dimension of $S_k(\Gamma_p)$ and the quantities $s_k(p,\Omega)$
\begin{equation}\label{dimskreleq}
	\dim_{\C}S_k(\Gamma_p)= \sum\limits_{\Omega} \sum_{\pi  \in S_k(p,\Omega)} \dim\pi_p^{C_p}= \sum\limits_{\Omega} s_k(p,\Omega)\,d_{C_p,\Omega}.
\end{equation}
Here $d_{C_p,\Omega}$ is the common dimension  for  all Iwahori-spherical representations of type $\Omega$. These quantities are given explicitly in \cite[Table 3]{Schmidt200501}. For the purpose of this article, we will use \eqref{dimskreleq} to get some congruences for $\dim_{\C}S_k(\Gamma_p)$. In order to compute $s_k(p,\Omega)$ explicitly one needs to look at the representations in $S_k(p,\Omega)$ inside different Arthur packets. In particular, the Arthur packets types {\bf(G)}, {\bf(Y)} and {\bf(P)} are known as the general type, the Yoshida type and the Saito-Kurokawa type, respectively. See \cite{Arthur2004,Schmidt2018} for more details about the Arthur packets for $\GSp(4)$. Let $S_k^{(*)}(p,\Omega)$ be the set of those $\pi\in S_k(p,\Omega)$ that lie in an Arthur packet of type $(*)$ and $s_k^{(*)}(p,\Omega)=\#S_k^{(*)}(p,\Omega)$. Then we get
\begin{equation}\label{no. of cup form0}
	s_k(p,\Omega)=s_k^{\rm (\mathbf{G})}(p,\Omega)+s_k^{\rm (\mathbf{Y})}(p,\Omega)+s_k^{\rm (\mathbf{P})}(p,\Omega).
\end{equation}
A summary of the quantities $s_k^{(*)}(p,\Omega)$ that are considered in \eqref{dimskreleq} and their explicit formulas is given in \cite[Section~2.2]{RoySchmidtYi2021}. 

The Saito-Kurokawa type {\bf(P)} and the Yoshida type {\bf(Y)} are two kinds of liftings from elliptic cuspidal automorphic representations. As a consequence, both $s_k^{\textbf{(P)}}(p,\Omega)$ and $s_k^{\textbf{(Y)}}(p,\Omega)$ are related to dimensions of spaces of elliptic modular forms. The following lemma is useful for finding the quantities $s_k(p,\Omega)$ for the representations of Saito-Kurokawa type and Yoshida type. This result can be obtained from the work of \cite{Yamauchi1973},
and it is explicitly given in \cite[Theorem~2.2]{Martin2018}. Here, $S_{k}^{\rm new}(\Gamma_{0}^{(1)}(p))$ is the new subspace of weight $k$ elliptic cusp forms on the congruence subgroup $\Gamma_{0}^{(1)}(p)$ of $\SL(2,\Z)$. The plus and minus spaces $S_{k}^{\pm,\rm new}(\Gamma_{0}^{(1)}(p))$ are the space spanned by the eigenforms in $S_{k}^{\rm new}(\Gamma_{0}^{(1)}(p))$ which have the sign $\pm 1$ in the functional equation of their $L$-functions.

	\begin{lemma}
	\label{theorem for +,- new space}
	For $p>3$ and even $k\ge 2$,
	\begin{equation*}
		\dim_{\C} S_{k}^{\pm,{\rm new}}(\Gamma_{0}^{(1)}(p))=\frac 12\dim_{\C} S_{k}^{{\rm new}}(\Gamma_{0}^{(1)}(p))\pm \frac12 \left(\frac12 h(\Delta_p)b-\delta_{k,2}\right),
	\end{equation*}
	where $h(\Delta_p)$ is the class number of $\Q(\sqrt{-p})$ and  
	\begin{equation}
		\label{b}
		b=\begin{cases} 
			1 & \text{if } p \equiv 1 \pmod 4,\\
			2 & \text{if } p \equiv 7 \pmod 8,\\
			4 & \text{if } p \equiv 3 \pmod 8.\\
		\end{cases} 
		\hspace{0.5in} \text{and}	\hspace{0.5in}
		\delta_{k,2}=
		\begin{cases}
			1 & \text{if } k=2,\\
			0 & \text{if } k\neq 2.
		\end{cases}
	\end{equation}
	For $k>2$,
	\begin{align*}
		\dim_{\C} S_{k}^{\pm,{\rm new}}(\Gamma_{0}^{(1)}(2))=&\frac 12\dim_{\C} S_{k}^{{\rm new}}(\Gamma_{0}^{(1)}(2))\pm
		\begin{cases} 
			\frac12 & \text{if } k \equiv 0,2 \pmod 8,\\
			0 & \text{else}.
		\end{cases}\\
		\dim_{\C} S_{k}^{\pm,{\rm new}}(\Gamma_{0}^{(1)}(3))=&\frac 12\dim_{\C} S_{k}^{{\rm new}}(\Gamma_{0}^{(1)}(3))\pm
		\begin{cases} 
			\frac12 & \text{if } k \equiv 0,2,6,8 \pmod {12},\\
			0 & \text{else}.
		\end{cases}
	\end{align*}
\end{lemma}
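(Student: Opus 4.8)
The plan is to reduce the statement to the Eichler--Selberg trace formula for the Fricke (Atkin--Lehner) involution $W_p$ acting on $S_k(\Gamma_0^{(1)}(p))$, which is the analytic content of Yamauchi's work. The starting point is to reinterpret the plus/minus decomposition in terms of $W_p$-eigenspaces. For a newform $f\in S_k^{\rm new}(\Gamma_0^{(1)}(p))$ of trivial nebentypus one has $W_pf=\epsilon_p(f)\,f$ with $\epsilon_p(f)=\pm1$, and the sign of the functional equation of $L(s,f)$ is $w(f)=\pm(-1)^{k/2}\epsilon_p(f)$. Hence $S_k^{\pm,\rm new}(\Gamma_0^{(1)}(p))$ is precisely the $\pm(-1)^{k/2}$-eigenspace of $W_p$ on the new space, and writing $d=\dim_{\C}S_k^{\rm new}(\Gamma_0^{(1)}(p))$, $d_\pm=\dim_{\C}S_k^{\pm,\rm new}(\Gamma_0^{(1)}(p))$, the two relations $d_++d_-=d$ and $d_+-d_-=(-1)^{k/2}\,\trace\!\big(W_p\mid S_k^{\rm new}(\Gamma_0^{(1)}(p))\big)$ yield the stated formula as soon as the trace is evaluated. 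Thus the whole problem reduces to computing $\trace(W_p\mid S_k^{\rm new})$.

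I would first reduce from the new space to the full cusp space. On the $p$-old subspace, which for each level-one newform $g$ is spanned by $\{g(z),g(pz)\}$, the Fricke involution interchanges $g(z)$ with a scalar multiple of $g(pz)$ and is therefore represented by an anti-diagonal matrix, so its trace on the old space is zero; consequently $\trace(W_p\mid S_k^{\rm new})=\trace(W_p\mid S_k(\Gamma_0^{(1)}(p)))$. I would then feed the element $\left[\begin{smallmatrix}0&-1\\p&0\end{smallmatrix}\right]$, of trace $0$ and determinant $p$, into the Eichler--Selberg trace formula. Its characteristic polynomial is $X^2+p$, so the only elliptic contribution comes from discriminant $-4p$; after the hyperbolic and parabolic terms drop out, the trace collapses (up to the weight sign $(-1)^{k/2}$) to a single term $\tfrac12 H(4p)$ together with the weight-two correction, where $H(4p)$ denotes the Hurwitz--Kronecker class number of discriminant $-4p$.

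The next step is the arithmetic bookkeeping converting $H(4p)$ into $h(\Delta_p)$. Writing $H(4p)=\sum_{d^2\mid 4p}h(-4p/d^2)$ over the admissible $d\in\{1,2\}$ and invoking the class-number formula for the conductor-two order $h(-4p)=h(-p)\big(2-(\tfrac{-p}{2})\big)$ when $p\equiv 3\pmod 4$, one obtains $H(4p)=b\,h(\Delta_p)$ with $b$ exactly as in \eqref{b}: for $p\equiv1\pmod4$ the $d=2$ term is inadmissible and $b=1$, while for $p\equiv 7,3\pmod 8$ the Kronecker symbol $(\tfrac{-p}{2})=\pm1$ produces $b=2,4$ respectively. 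The residual $-\delta_{k,2}$ reflects the extra contribution present only in weight two (shifting the cuspidal count by one), and it is worth emphasizing that the two factors of $(-1)^{k/2}$ — one from the functional-equation sign, one from the weight sign in the elliptic term — cancel, which is exactly why the final formula for $p>3$ is independent of $k\bmod 4$. Substituting back gives $d_\pm=\tfrac12 d\pm\tfrac12\big(\tfrac12 h(\Delta_p)b-\delta_{k,2}\big)$.

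For $p=2,3$ the field $\Q(\sqrt{-p})$ has class number one but carries nontrivial units, and the modular curve acquires extra elliptic points, so the fixed-point terms must be evaluated directly; the contributions are governed by $k$-th powers of the roots of unity attached to these elliptic points, producing the periodicities modulo $8$ (for $p=2$) and modulo $12$ (for $p=3$). I expect the main obstacle to be precisely the bookkeeping of the third paragraph — correctly normalizing the Eichler--Selberg elliptic term for the non-integral-weight-looking operator $W_p$, tracking the unit weights and the conductor-two order across the congruence classes of $p\bmod 8$, and pinning down the sign matching between the functional-equation sign and the Atkin--Lehner eigenvalue — rather than any single conceptual difficulty. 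A fully worked version of exactly this computation is \cite[Theorem~2.2]{Martin2018}, which one may also cite directly.
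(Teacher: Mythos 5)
The paper does not prove Lemma~\ref{theorem for +,- new space} at all: it simply quotes the result from \cite[Theorem~2.2]{Martin2018}, noting that it ultimately comes from Yamauchi's trace formula for Atkin--Lehner involutions \cite{Yamauchi1973}. Your sketch is a correct outline of exactly that underlying computation --- the identification of the $\pm$-spaces with $W_p$-eigenspaces up to the sign $(-1)^{k/2}$, the vanishing of $\trace(W_p)$ on the $p$-old space, the single elliptic term of discriminant $-4p$ forced by $p\mid t$ and $t^2<4p$ (with the extra terms $t=\pm2$, $t=\pm3$ for $p=2,3$ producing the period-$8$ and period-$12$ behaviour), and the conversion $H(4p)=b\,h(\Delta_p)$ --- and you close by citing the same reference, so this is essentially the same approach as the paper's.
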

We note that, when $\Omega$ is one of the types ${\rm IIb}, {\rm Vb}, {\rm VIc}$, there is a global representation in $S_k^{\rm (\mathbf{P})}(p,\Omega)$. When $\Omega$ is of type $ {\rm VIb}$, there are global representations in $S_k^{(*)}(p,\Omega)$ for all three types $(\mathbf{G})$, $(\mathbf{Y})$, and $ (\mathbf{P})$.

From now on we assume $k\ge 3$. Then, by Lemma~\ref{theorem for +,- new space} and \textcolor{black}{\cite[(3.10)]{RoySchmidtYi2021}}, for $p\ge 5$ we get the following identities
\begin{equation}\label{relations of Saito-Kurokawa type}
	\begin{split}
s_k(p,{\rm Vb})&=\begin{cases}
			0&\text{if } k \text{ is odd},\\
						\frac 12\dim_{\C} S_{2k-2}^{{\rm new}}(\Gamma_{0}^{(1)}(p))-\frac14 h(\Delta_p)b\qquad &\text{if } k \text{ is even}.
		\end{cases}\\
		s_k^{\textbf{(P)}}(p,{\rm VIb})&= \begin{cases}
			0&\text{if } k \text{ is odd},\\
		\frac 12\dim_{\C} S_{2k-2}^{{\rm new}}(\Gamma_{0}^{(1)}(p))+\frac14 h(\Delta_p)b \qquad &\text{if } k \text{ is even}.
		\end{cases}\\
		s_k(p,{\rm VIc})&=\begin{cases}
					\frac 12\dim_{\C} S_{2k-2}^{{\rm new}}(\Gamma_{0}^{(1)}(p))-\frac14 h(\Delta_p)b \qquad &\text{if } k \text{ is odd},\\
			0&\text{if } k \text{ is even}.
		\end{cases}
	\end{split}
	\end{equation}  
Similarly, \textcolor{black}{by Lemma~\ref{theorem for +,- new space} and \cite[(3.11)]{RoySchmidtYi2021}}, we have $s_k^{{\rm \textbf{(Y)}}}(p,{\rm VIb})=0$ for $p=2,3$, and for $p\ge5$ we have
\begin{equation}\label{Ykeven}	
\begin{split}
s_k^{{\rm \textbf{(Y)}}}(p,{\rm VIb})=\frac{1}{2}\dim_{\C} S_{2k-2}^{{\rm new}}(\Gamma_{0}^{(1)}(p))\dim_{\C} S_{2}^{{\rm new}}(\Gamma_{0}^{(1)}(p))
+\frac{(-1)^{k}}{8}h(\Delta_p)^2b^2-\frac{(-1)^k}{4}h(\Delta_p)b.
\end{split}
\end{equation} 

	\section{Congruences for dimensions of spaces of Siegel cusp forms}\label{main results}
In this section, we derive some congruences modulo $16$ and modulo $4$ involving dimensions of spaces of Siegel cusp forms of degree $2$,  the class number of $\Q(\sqrt{-p})$, and dimensions of spaces of elliptic modular newforms.
	\begin{theorem}\label{Proposition of IIIa/VI p geq 5}
		Let $h(\Delta_p)$ be the class number of $\Q(\sqrt{-p})$. For $k\ge 3$ and $p \ge 5$, we have the following congruence relations.
		
		\rm{(i)}	For $p \equiv 1 \pmod 4$
		\begin{equation*}
			\begin{split}
				&(-1)^{k-1}h(\Delta_p)^2-\begin{cases}4h(\Delta_p)&\text{if } k \text{ is odd}\\0&\text{if } k \text{ is even}\end{cases}\\
				&\equiv 4\left(\dim_{\C} S_{2k-2}^{{\rm new}}(\Gamma_{0}^{(1)}(p))\dim_{\C} S_{2}^{{\rm new}}(\Gamma_{0}^{(1)}(p))-(-1)^{k-1}\dim_{\C} S_{2k-2}^{{\rm new}}(\Gamma_{0}^{(1)}(p))\right)\\
				&\hspace{30ex}+8\Big(\dim_{\C} S_k({\rm K}(p))-\dim_{\C} S_k(\Gamma_0(p))\Big)\quad \pmod{16}.
			\end{split}
		\end{equation*}
		
		\rm{(ii)}	For $p \equiv 7 \pmod 8$
		\begin{equation*}
			\begin{split}
				&(-1)^{k-1}h(\Delta_p)^2-\begin{cases}2h(\Delta_p)&\text{if } k \text{ is odd}\\0&\text{if } k \text{ is even}\end{cases}\\
				&\equiv \dim_{\C} S_{2k-2}^{{\rm new}}(\Gamma_{0}^{(1)}(p))\dim_{\C} S_{2}^{{\rm new}}(\Gamma_{0}^{(1)}(p))-(-1)^{k-1}\dim_{\C} S_{2k-2}^{{\rm new}}(\Gamma_{0}^{(1)}(p))\\
				&\hspace{31ex} +2\Big(\dim_{\C} S_k({\rm K}(p))-\dim_{\C} S_k(\Gamma_0(p))\Big) \quad\pmod{4}.
			\end{split}
		\end{equation*}
		
		\rm{(iii)}	For $p \equiv 3 \pmod 8$
		\begin{equation*}
			\begin{split}
				&2\left(\dim_{\C} S_k({\rm K}(p))-\dim_{\C} S_k(\Gamma_0(p))\right) \\
				&\equiv (-1)^{k-1}\dim_{\C} S_{2k-2}^{{\rm new}}(\Gamma_{0}^{(1)}(p))-\dim_{\C} S_{2k-2}^{{\rm new}}(\Gamma_{0}^{(1)}(p))\dim_{\C} S_{2}^{{\rm new}}(\Gamma_{0}^{(1)}(p)) \qquad\pmod{4}.
			\end{split}
		\end{equation*}
	\end{theorem}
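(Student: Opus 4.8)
The plan is to compute both dimensions from the representation-theoretic formula \eqref{dimskreleq} and then pass to the difference $\dim_{\C} S_k({\rm K}(p)) - \dim_{\C} S_k(\Gamma_0(p))$, which by \eqref{dimskreleq} equals
\[
\sum_{\Omega} s_k(p,\Omega)\bigl(d_{{\rm K}(p),\Omega} - d_{\Gamma_0(p),\Omega}\bigr),
\]
where the two families of multiplicities are read off from \cite[Table 3]{Schmidt200501}. The advantage of working with the difference is that for several Iwahori-spherical types the two multiplicities coincide, so those types drop out, while for the remaining types one is left with an explicit integer linear combination of the counts $s_k(p,\Omega)$.

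Next I would insert the decomposition \eqref{no. of cup form0} and the explicit evaluations of the Saito-Kurokawa and Yoshida counts. The key structural point is that the class number enters this combination only through the types appearing in \eqref{relations of Saito-Kurokawa type} and \eqref{Ykeven}: the linear term $h(\Delta_p)b$ comes from the Saito-Kurokawa contributions of types ${\rm Vb}$, ${\rm VIb}$, ${\rm VIc}$, while the quadratic term $h(\Delta_p)^2 b^2$ comes solely from the Yoshida contribution $s_k^{\textbf{(Y)}}(p,{\rm VIb})$ in \eqref{Ykeven}. Every remaining contribution is either of general type or is one of the elliptic-newform quantities $\dim_{\C} S_{2k-2}^{\rm new}(\Gamma_{0}^{(1)}(p))$ and $\dim_{\C} S_{2k-2}^{\rm new}(\Gamma_{0}^{(1)}(p))\dim_{\C} S_{2}^{\rm new}(\Gamma_{0}^{(1)}(p))$; these are carried along symbolically and reappear on the right-hand side of the theorem.

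I would then clear denominators and reduce. Because \eqref{Ykeven} carries a factor $\tfrac18$ and \eqref{relations of Saito-Kurokawa type} a factor $\tfrac14$, multiplying the difference by $8$ in case (i) (respectively by $2$ in cases (ii) and (iii)) turns every class-number term into an integer multiple of $h(\Delta_p)^2 b^2$ or $h(\Delta_p)b$. The three cases are then governed entirely by the value of $b$ from \eqref{b}: for $p\equiv 1\pmod 4$ one has $b=1$, so the quadratic term $h(\Delta_p)^2 b^2 = h(\Delta_p)^2$ survives modulo $16$; for $p\equiv 7\pmod 8$ one has $b=2$, so after the factor $2$ the quadratic term is read modulo $4$; and for $p\equiv 3\pmod 8$ one has $b=4$, whence $h(\Delta_p)^2 b^2 = 16\,h(\Delta_p)^2$ is divisible by $16$ and drops out, while a short computation shows the linear class-number terms from \eqref{relations of Saito-Kurokawa type} and \eqref{Ykeven} cancel modulo $4$ — this is precisely why no class-number term appears on the left-hand side of part (iii). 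In each case, matching the surviving integer combination against the displayed right-hand side, and separating the parity cases in $k$ via \eqref{relations of Saito-Kurokawa type}, yields the asserted congruence.

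The main obstacle is the bookkeeping in the first two steps. One must tabulate $d_{{\rm K}(p),\Omega} - d_{\Gamma_0(p),\Omega}$ for every type in \cite[Table 3]{Schmidt200501}, determine exactly which types survive in the difference, and verify that every contribution not displayed on the right-hand side — in particular all the general-type counts and any class-number pieces of types other than the Yoshida ${\rm VIb}$ term — either cancels in the difference or becomes divisible by $16$ (respectively $4$) after multiplication. The delicate point is pinning down the exact integer with which $s_k^{\textbf{(Y)}}(p,{\rm VIb})$ occurs in the difference, since it is this coefficient, together with the sign $(-1)^k$ in \eqref{Ykeven}, that produces the leading term $(-1)^{k-1}h(\Delta_p)^2$; an error of sign or parity there would propagate through all three parts. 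Once the multiplicities are correct, the modular reductions are routine.
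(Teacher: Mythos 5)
Your proposal is correct and follows essentially the same route as the paper: the integer linear combination of the counts $s_k(p,\Omega)$ that you propose to extract from \cite[Table 3]{Schmidt200501} is exactly the identity the paper quotes from \cite[(3.12)]{RoySchmidtYi2021} (its equation \eqref{IIIaVIab}), and the remaining steps — substituting \eqref{relations of Saito-Kurokawa type} and \eqref{Ykeven}, multiplying by $8$ (resp.\ $2$), and reducing modulo $16$ (resp.\ $4$) according to the value of $b$ — coincide with the paper's argument. The only cosmetic differences are that you unpack the cited identity from the fixed-vector dimensions directly, and that in this particular difference the type ${\rm Vb}$ count does not actually occur (the surviving class-number contributions come from ${\rm VIc}$ and the $(\mathbf{P})$ and $(\mathbf{Y})$ parts of ${\rm VIb}$), which does not affect the conclusion.
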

	\begin{proof} 
	\textcolor{black}{From \cite[(2.5) and (2.6)]{RoySchmidtYi2021}, we have 
	\begin{equation}
	\label{eqIIIa.1}
	    \begin{split}
	        s_k^{\rm \mathbf{(G)}}(p, {\rm VIa/b})&=s_k^{\rm \mathbf{(G)}}(p, {\rm VIa})=s_k^{\rm \mathbf{(G)}}(p, {\rm VIb}),\\
	        s_k(p, {\rm IIIa+VIa/b})&:= s_k^{\rm \mathbf{(G)}}(p, {\rm IIIa})+s_k^{\rm \mathbf{(G)}}(p, {\rm VIa/b}).
	    \end{split}
	\end{equation}
Using \eqref{eqIIIa.1} and replacing $s_k(p,{\rm VIb})$ by $s_k^{\mathbf{(G)}}(p,{\rm VIb})+s_k^{\mathbf{(Y)}}(p,{\rm VIb})+s_k^{\mathbf{(P)}}(p,{\rm VIb})$ in \cite[(3.12)]{RoySchmidtYi2021}, we obtain the following identity
}
		\begin{equation}\label{IIIaVIab}
			\begin{split}
			s_k(p,{\rm IIIa+VIa/b})&=\frac 12 \dim_{\C} S_k(\Gamma_0(p))-\frac 12 \dim_{\C} S_k({\rm K}(p))-s_k(p,{\rm I})-s_k(p,{\rm IIb})\\
			&\qquad -\frac 12 s_k^{\rm \mathbf{(P)}}(p,{\rm VIb})-\frac 12 s_k^{\rm \mathbf{(Y)}}(p,{\rm VIb})+ \frac 12 s_k(p,{\rm VIc}).
			\end{split}
		\end{equation}	
		Using \eqref{relations of Saito-Kurokawa type} and \eqref{Ykeven} we obtain
		\begin{align*}
			&2(s_k(p,{\rm IIIa+VIa/b})+s_k(p,{\rm I})+s_k(p,{\rm IIb}))+\dim_{\C} S_k({\rm K}(p))-\dim_{\C} S_k(\Gamma_0(p))\\
			&=(-1)^{k-1}\frac 12\dim_{\C} S_{2k-2}^{\rm new}(\Gamma_{0}^{(1)}(p))-\frac{1}{2}\dim_{\C} S_{2k-2}^{\rm new}(\Gamma_{0}^{(1)}(p))\dim_{\C} S_{2}^{\rm new}(\Gamma_{0}^{(1)}(p))\\
			&\qquad  +(-1)^{k-1}\frac{1}{8}h(\Delta_p)^2b^2-\begin{cases}\frac{1}{2}h(\Delta_p)b&\mbox{if $k$ is odd},\\0&\mbox{if $k$ is even}.\end{cases}
		\end{align*} 
		\begin{enumerate}
			\item[(i)] If $p\equiv 1\ (\mathrm{mod}\ 4)$, we have $b=1$. Then we get
			\begin{equation*}
				\begin{split}
					&16(s_k(p,{\rm IIIa+VIa/b})+s_k(p,{\rm I})+s_k(p,{\rm IIb}))+8\dim_{\C} S_k({\rm K}(p))-8\dim_{\C} S_k(\Gamma_0(p))\\
					&=4(-1)^{k-1}\dim_{\C} S_{2k-2}^{\rm new}(\Gamma_{0}^{(1)}(p))-4\dim_{\C} S_{2k-2}^{\rm new}(\Gamma_{0}^{(1)}(p))\dim_{\C} S_{2}^{\rm new}(\Gamma_{0}^{(1)}(p))\\
			&\qquad  +(-1)^{k-1}h(\Delta_p)^2-\begin{cases}4h(\Delta_p)&\mbox{if $k$ is odd},\\0&\mbox{if $k$ is even}.\end{cases}
				\end{split}
			\end{equation*}
			\item[(ii)]  If $p\equiv 7\ (\mathrm{mod}\ 8)$, we have $b=2$. Then we get
			\begin{equation*}
				\begin{split}
					&4(s_k(p,{\rm IIIa+VIa/b})+s_k(p,{\rm I})+s_k(p,{\rm IIb}))+2\dim_{\C} S_k({\rm K}(p))-2\dim_{\C} S_k(\Gamma_0(p))\\
					&=(-1)^{k-1}\dim_{\C} S_{2k-2}^{\rm new}(\Gamma_{0}^{(1)}(p))-\dim_{\C} S_{2k-2}^{\rm new}(\Gamma_{0}^{(1)}(p))\dim_{\C} S_{2}^{\rm new}(\Gamma_{0}^{(1)}(p))\\
		&\qquad  +(-1)^{k-1}h(\Delta_p)^2-\begin{cases}2h(\Delta_p)&\mbox{if $k$ is odd},\\0&\mbox{if $k$ is even}.\end{cases}
				\end{split}
			\end{equation*}
			\item[(iii)]  If $p\equiv 3\ (\mathrm{mod}\ 8)$, we have $b=4$. Then we get
			\begin{equation*}
				\begin{split}
					&4(s_k(p,{\rm IIIa+VIa/b})+s_k(p,{\rm I})+s_k(p,{\rm IIb}))+2\dim_{\C} S_k({\rm K}(p))-2\dim_{\C} S_k(\Gamma_0(p))\\
					&=(-1)^{k-1}\dim_{\C} S_{2k-2}^{\rm new}(\Gamma_{0}^{(1)}(p))-\dim_{\C} S_{2k-2}^{\rm new}(\Gamma_{0}^{(1)}(p))\dim_{\C} S_{2}^{\rm new}(\Gamma_{0}^{(1)}(p))\\
			&\qquad  +(-1)^{k-1}\cdot4h(\Delta_p)^2-\begin{cases}4h(\Delta_p)&\mbox{if $k$ is odd},\\0&\mbox{if $k$ is even}.\end{cases}
				\end{split}
			\end{equation*}
		\end{enumerate} 
		The desired congruences now follow from above discussions. 
	\end{proof}
	
	\begin{theorem}\label{Proposition of Va p geq 5}
		Let $h(\Delta_p)$ be the class number of $\Q(\sqrt{-p})$. For $k\ge 3$ and $p \ge 5$, we have the following congruence relations.
		
		\rm{(i)}	For $p \equiv 1 \pmod 4$
		\begin{equation*}
			\begin{split}
				(-1)^{k}h(\Delta_p)^2
				&\equiv -4\Big(\dim_{\C} S_{2k-2}^{{\rm new}}(\Gamma_{0}^{(1)}(p))\dim_{\C} S_{2}^{{\rm new}}(\Gamma_{0}^{(1)}(p))+\dim_{\C} S_{2k-2}^{{\rm new}}(\Gamma_{0}^{(1)}(p))\Big)\\
				&\hspace{22ex} +8\Big(\dim_{\C} S_k({\rm K}(p))+\dim_{\C} S_k(\Gamma_0(p))\Big) \pmod{16}.
			\end{split}
		\end{equation*}
		
		\rm{(ii)}	For $p \equiv 7 \pmod 8$
		\begin{equation*}
			\begin{split}
				(-1)^{k}h(\Delta_p)^2
				&\equiv -\dim_{\C} S_{2k-2}^{{\rm new}}(\Gamma_{0}^{(1)}(p))\dim_{\C} S_{2}^{{\rm new}}(\Gamma_{0}^{(1)}(p))-\dim_{\C} S_{2k-2}^{{\rm new}}(\Gamma_{0}^{(1)}(p))\\
				&\hspace{22ex}+2\Big(\dim_{\C} S_k({\rm K}(p))+\dim_{\C} S_k(\Gamma_0(p))\Big) \pmod{4}.
			\end{split}
		\end{equation*}	
		
		\rm{(iii)}	For $p \equiv 3 \pmod 8$
		\begin{equation*}
			\begin{split}
				&2\left(\dim_{\C} S_k({\rm K}(p))+\dim_{\C} S_k(\Gamma_0(p))\right) \\
				& \equiv \dim_{\C} S_{2k-2}^{{\rm new}}(\Gamma_{0}^{(1)}(p))+\dim_{\C} S_{2k-2}^{{\rm new}}(\Gamma_{0}^{(1)}(p))\dim_{\C} S_{2}^{{\rm new}}(\Gamma_{0}^{(1)}(p))  \pmod{4}.
			\end{split}
		\end{equation*}
	\end{theorem}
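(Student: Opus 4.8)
The plan is to mirror the proof of Theorem~\ref{Proposition of IIIa/VI p geq 5}, replacing the difference identity \eqref{IIIaVIab} by the companion relation in \cite{RoySchmidtYi2021} that isolates the general-type multiplicity $s_k(p,{\rm Va})$ and yields the \emph{sum} $\dim_{\C}S_k({\rm K}(p))+\dim_{\C}S_k(\Gamma_0(p))$ rather than their difference. Concretely, I would begin from the analogue of \eqref{IIIaVIab} which, after clearing denominators by a factor of $2$, takes the shape
\begin{equation*}
\begin{split}
&2\big(\text{integer-valued general-type counts, including }s_k(p,{\rm Va})\big)+\dim_{\C}S_k({\rm K}(p))+\dim_{\C}S_k(\Gamma_0(p))\\
&\qquad=s_k^{\mathbf{(P)}}(p,{\rm VIb})+s_k(p,{\rm VIc})+s_k^{\mathbf{(Y)}}(p,{\rm VIb}).
\end{split}
\end{equation*}

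Next I would substitute the Saito--Kurokawa formulas \eqref{relations of Saito-Kurokawa type} for $s_k^{\mathbf{(P)}}(p,{\rm VIb})$ and $s_k(p,{\rm VIc})$ together with the Yoshida formula \eqref{Ykeven} for $s_k^{\mathbf{(Y)}}(p,{\rm VIb})$. The essential structural difference from Theorem~\ref{Proposition of IIIa/VI p geq 5} is that here the Yoshida term enters with a $+$ sign, so its quadratic part becomes $\tfrac{(-1)^{k}}{8}h(\Delta_p)^2b^2$ (the opposite sign from before), while the surviving single term $\tfrac12\dim_{\C}S_{2k-2}^{\rm new}(\Gamma_0^{(1)}(p))$ and the product $\tfrac12\dim_{\C}S_{2k-2}^{\rm new}(\Gamma_0^{(1)}(p))\dim_{\C}S_2^{\rm new}(\Gamma_0^{(1)}(p))$ appear with no parity sign. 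Crucially, the linear-in-$h(\Delta_p)$ pieces coming from $s_k^{\mathbf{(P)}}(p,{\rm VIb})$ and $s_k(p,{\rm VIc})$ cancel the linear Yoshida term $\mp\tfrac{(-1)^{k}}{4}h(\Delta_p)b$ for \emph{both} parities of $k$, which is exactly why no linear class-number term survives in the statement, in contrast with parts (i) and (ii) of Theorem~\ref{Proposition of IIIa/VI p geq 5}.

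Finally, I would split into the three residue classes through the value of $b$ in \eqref{b} and scale exactly as before. For $p\equiv1\pmod4$ ($b=1$) multiply by $8$ to push the integer counts into multiples of $16$ and read the resulting equality modulo $16$, giving (i); for $p\equiv7\pmod8$ ($b=2$) and $p\equiv3\pmod8$ ($b=4$) multiply by $2$ and read modulo $4$, giving (ii) and (iii). In the case $b=4$ the factor $b^2=16$ makes the quadratic term $\tfrac{(-1)^k}{8}h(\Delta_p)^2b^2=2(-1)^kh(\Delta_p)^2$ vanish modulo $4$ after scaling, which is precisely why (iii) retains only the two dimension terms and no class number at all.

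The main obstacle I anticipate is pinning down the companion identity and its exact integer coefficients — in particular verifying that the Saito--Kurokawa linear contributions genuinely annihilate the Yoshida linear term for both parities (so that $h(\Delta_p)$ drops out entirely) and that the quadratic term carries the sign $(-1)^k$ rather than $(-1)^{k-1}$. Once this bookkeeping is secured, the three congruences follow from the same elementary modular arithmetic as in Theorem~\ref{Proposition of IIIa/VI p geq 5}.
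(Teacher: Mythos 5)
Your proposal matches the paper's proof in all essentials: the paper starts from the identity for $s_k(p,{\rm Va})$ taken from \cite{RoySchmidtYi2021}, whose rearranged left-hand side actually reads $-2\dim_{\C}S_k(\Kl(p))+\dim_{\C}S_k(\Gamma_0(p))+3\dim_{\C}S_k({\rm K}(p))$ plus twice a sum of integer counts --- this reduces to the shape you guessed once you scale by $8$ (resp.\ $2$) and work modulo $16$ (resp.\ $4$), since the $\Kl(p)$ term and the extra $2\dim_{\C}S_k({\rm K}(p))$ then vanish --- and, exactly as you predict, the $\pm\tfrac14 h(\Delta_p)b$ contributions from \eqref{relations of Saito-Kurokawa type} cancel the linear Yoshida term of \eqref{Ykeven} for both parities of $k$, leaving only $\tfrac{(-1)^k}{8}h(\Delta_p)^2b^2$ together with the two dimension terms. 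The case split on $b$ and the final reductions are identical to the paper's, so your bookkeeping concerns are all resolved in the affirmative.
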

	\begin{proof} \textcolor{black}{We have the following formula for $s_k(p,{\rm Va})$ from the proof of \cite[Theorems 3.5–3.8]{RoySchmidtYi2021}
	}
		\begin{equation*}
			\begin{split}
				&s_k(p,{\rm Va})=\dim_{\C} S_k(\Kl(p))-\frac 12 \dim_{\C} S_k(\Gamma_0(p))-\frac 32\dim_{\C} S_k({\rm K}(p))+s_k(p,{\rm I})\nonumber\\
				&\hspace{5ex}+s_k(p,{\rm IIb})+s_k(p,{\rm Vb})+\frac 12 s_k^{\rm \mathbf{(P)}}(p,{\rm VIb})+\frac 12 s_k^{\rm \mathbf{(Y)}}(p,{\rm VIb})+\frac 12 s_k(p,{\rm VIc}).
			\end{split}
		\end{equation*}
		Using \eqref{relations of Saito-Kurokawa type} and \eqref{Ykeven} we obtain
		\begin{align*}
			&2(s_k(p,{\rm Va})-s_k(p,{\rm I})-s_k(p,{\rm IIb})-s_k(p,{\rm Vb}))\\
			 &-2\dim_{\C} S_k(\Kl(p))+\dim_{\C} S_k(\Gamma_0(p))+3\dim_{\C} S_k({\rm K}(p))\\
			&=\frac 12\dim_{\C} S_{2k-2}^{{\rm new}}(\Gamma_{0}^{(1)}(p))+\frac{1}{2}\dim_{\C} S_{2k-2}^{{\rm new}}(\Gamma_{0}^{(1)}(p))\dim_{\C} S_{2}^{{\rm new}}(\Gamma_{0}^{(1)}(p))+(-1)^k\frac{1}{8}h(\Delta_p)^2b^2.
		\end{align*} 
		Then, by the same arguments as in Theorem~\ref{Proposition of IIIa/VI p geq 5}, we get the desired congruences.
	\end{proof}
Similarly, if we use the following identity \textcolor{black}{from the proof of \cite[Theorems 3.5–3.8]{RoySchmidtYi2021}}
\begin{equation*}
s_k(p,{\rm IIa})=\dim_{\C} S_k({\rm K}(p))-2s_k(p,{\rm I})-s_k(p,{\rm IIb})-s_k(p,{\rm Vb})-s_k(p,{\rm VIc}),
\end{equation*}
we obtain a congruence that relates dimension of the space $S_{2k-2}^{{\rm new}}(\Gamma_{0}^{(1)}(p))$ of elliptic newforms and the class number of $\Q(\sqrt{-p})$. This result also follows from \cite[Theorem~3.3]{Wakatsuki2014}. 
	\begin{theorem}\label{Theorem 3.3 in Wakatsuki2014}
		Let $h(\Delta_p)$ be the class number of $\Q(\sqrt{-p})$. For $k\ge 3$ and $p \ge 5$, we have the following congruences	
		\begin{equation*}
			\begin{split}
				\dim_{\C} S_{2k-2}^{{\rm new}}(\Gamma_{0}^{(1)}(p)) \equiv \frac 12 h(\Delta_p)b=\begin{cases}  \frac 12 h(\Delta_p) \pmod{2}&\text{if } p \equiv 1 \pmod 4,\\
					h(\Delta_p)\ \, \pmod{2}&\text{if } p \equiv 7 \pmod 8,\\ 0\qquad\ \, \pmod{2}&\text{if } p \equiv 3 \pmod 8.\end{cases}
			\end{split}
		\end{equation*}
	\end{theorem}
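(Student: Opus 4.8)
The plan is to run the same kind of argument used for Theorems~\ref{Proposition of IIIa/VI p geq 5} and~\ref{Proposition of Va p geq 5}, but now the conclusion is a congruence modulo $2$ that follows purely from integrality, with no dimensions of Siegel cusp forms surviving on the arithmetic side.

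First I would rewrite the displayed identity for $s_k(p,{\rm IIa})$ as
\[
s_k(p,{\rm IIa})+2s_k(p,{\rm I})+s_k(p,{\rm IIb})+\big(s_k(p,{\rm Vb})+s_k(p,{\rm VIc})\big)=\dim_{\C}S_k({\rm K}(p)),
\]
isolating the two Saito-Kurokawa terms that are not manifestly integers. The key observation is that their sum does not depend on the parity of $k$: by \eqref{relations of Saito-Kurokawa type}, when $k$ is odd one has $s_k(p,{\rm Vb})=0$ while $s_k(p,{\rm VIc})=\frac12\dim_{\C}S_{2k-2}^{\rm new}(\Gamma_0^{(1)}(p))-\frac14 h(\Delta_p)b$, and when $k$ is even the two roles are interchanged, so in both cases
\[
s_k(p,{\rm Vb})+s_k(p,{\rm VIc})=\frac12\dim_{\C}S_{2k-2}^{\rm new}(\Gamma_0^{(1)}(p))-\frac14 h(\Delta_p)b.
\]

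Substituting this into the rewritten identity and solving for the half-dimension gives
\[
\frac12\dim_{\C}S_{2k-2}^{\rm new}(\Gamma_0^{(1)}(p))-\frac14 h(\Delta_p)b=\dim_{\C}S_k({\rm K}(p))-s_k(p,{\rm IIa})-2s_k(p,{\rm I})-s_k(p,{\rm IIb}),
\]
whose right-hand side is an integer because each summand is a dimension or a cardinality. Multiplying through by $2$, the quantity $\dim_{\C}S_{2k-2}^{\rm new}(\Gamma_0^{(1)}(p))-\frac12 h(\Delta_p)b$ is an even integer, which is exactly the asserted congruence $\dim_{\C}S_{2k-2}^{\rm new}(\Gamma_0^{(1)}(p))\equiv\frac12 h(\Delta_p)b\pmod 2$. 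Unwinding the values $b=1,2,4$ from \eqref{b} for $p\equiv1\pmod4$, $p\equiv7\pmod8$, and $p\equiv3\pmod8$ then produces the three right-hand sides $\frac12 h(\Delta_p)$, $h(\Delta_p)$, and $2h(\Delta_p)\equiv0$, respectively. I note that integrality is automatic here: since the left-hand side above is an integer, $\frac12 h(\Delta_p)b$ must be too, which forces $h(\Delta_p)$ to be even when $p\equiv1\pmod4$, so the argument needs no input from genus theory.

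The argument carries no serious difficulty; the one point to watch is the uniformity in the parity of $k$, namely checking that the two parity branches of \eqref{relations of Saito-Kurokawa type} collapse to a single $k$-independent expression for $s_k(p,{\rm Vb})+s_k(p,{\rm VIc})$ -- this is precisely what lets the congruence hold simultaneously for all $k\ge3$ rather than separately for even and odd $k$. The only other thing to confirm is that the borrowed identity for $s_k(p,{\rm IIa})$ from the proof of \cite[Theorems 3.5--3.8]{RoySchmidtYi2021} is valid throughout the stated range $k\ge3$, $p\ge5$.
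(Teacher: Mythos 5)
Your proposal is correct and follows exactly the route the paper indicates: it substitutes the parity-collapsed sum $s_k(p,{\rm Vb})+s_k(p,{\rm VIc})=\frac12\dim_{\C}S_{2k-2}^{\rm new}(\Gamma_0^{(1)}(p))-\frac14 h(\Delta_p)b$ from \eqref{relations of Saito-Kurokawa type} into the quoted identity for $s_k(p,{\rm IIa})$ and reads off the congruence from integrality of the remaining terms. The paper leaves these details implicit, so your write-up is simply a fleshed-out version of the same argument.
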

Next we consider the results for $p=2, 3$. We have to treat $p=2,3$ separately because the formulas in Lemma~\ref{theorem for +,- new space} are different for these two primes.
	\begin{proposition}\label{Proposition for IIIa/VI(2)}
	Suppose $k\geq 3$. Then, modulo 4 we get
	
	\begin{align*}
					2\dim_{\C} S_k({\rm K}(2))-2\dim_{\C} S_k(\Gamma_0(2))
				&\equiv\begin{cases}
						-\dim_{\C} S_{2k-2}^{{\rm new}}(\Gamma_{0}^{(1)}(2)) &\text{if } k \equiv 0\pmod 4,\\[1ex]
						\dim_{\C} S_{2k-2}^{{\rm new}}(\Gamma_{0}^{(1)}(2))+1 &\text{if } k \equiv 1\pmod 4,\\[1ex]
						-\dim_{\C} S_{2k-2}^{{\rm new}}(\Gamma_{0}^{(1)}(2))-1 &\text{if } k \equiv 2\pmod 4,\\[1ex]
						\dim_{\C} S_{2k-2}^{{\rm new}}(\Gamma_{0}^{(1)}(2)) &\text{if } k \equiv 3\pmod 4,
				\end{cases}
		\end{align*}
and		
		\begin{align*}
	2\dim_{\C} S_k({\rm K}(3))-2\dim_{\C} S_k(\Gamma_0(3))
	&\equiv\begin{cases}
		-\dim_{\C} S_{2k-2}^{{\rm new}}(\Gamma_{0}^{(1)}(3)) &\text{if } k \equiv 0 \hspace{2ex}\pmod 6,\\[1ex]
		\dim_{\C} S_{2k-2}^{{\rm new}}(\Gamma_{0}^{(1)}(3))+1 &\text{if } k \equiv 1,5\pmod 6,\\[1ex]
		-\dim_{\C} S_{2k-2}^{{\rm new}}(\Gamma_{0}^{(1)}(3))-1 &\text{if } k \equiv 2,4\pmod 6,\\[1ex]
		\dim_{\C} S_{2k-2}^{{\rm new}}(\Gamma_{0}^{(1)}(3)) &\text{if } k \equiv 3\hspace{2ex}\pmod 6.
	\end{cases}	
\end{align*}				
\end{proposition}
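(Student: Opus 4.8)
The plan is to run the proof of Theorem~\ref{Proposition of IIIa/VI p geq 5} again with $p\in\{2,3\}$, the only genuine changes being that the Yoshida contribution now vanishes and that the surviving Saito--Kurokawa contributions must be evaluated by the $p=2,3$ cases of Lemma~\ref{theorem for +,- new space}. First I would take the identity \eqref{IIIaVIab}, which comes from \cite[(3.12)]{RoySchmidtYi2021} together with \eqref{eqIIIa.1} and is valid for every prime $p$, and rearrange it into the master identity
\begin{equation*}
\begin{split}
&2\big(s_k(p,{\rm IIIa+VIa/b})+s_k(p,{\rm I})+s_k(p,{\rm IIb})\big)+\dim_{\C} S_k({\rm K}(p))-\dim_{\C} S_k(\Gamma_0(p))\\
&\qquad =-s_k^{\rm \mathbf{(P)}}(p,{\rm VIb})-s_k^{\rm \mathbf{(Y)}}(p,{\rm VIb})+s_k(p,{\rm VIc}).
\end{split}
\end{equation*}

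The decisive feature of $p=2,3$ is that $s_k^{\rm \mathbf{(Y)}}(p,{\rm VIb})=0$, as recorded just before \eqref{Ykeven}. In the $p\ge5$ argument it was exactly this Yoshida term that generated the $h(\Delta_p)^2$-contribution and forced the modulus $16$; with it gone, the right-hand side reduces to $-s_k^{\rm \mathbf{(P)}}(p,{\rm VIb})$ for even $k$ and to $s_k(p,{\rm VIc})$ for odd $k$, no square term survives, and this is precisely why the present congruence is only modulo $4$.

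Next I would recompute the surviving Saito--Kurokawa contributions by repeating the derivation of \eqref{relations of Saito-Kurokawa type} but feeding in the $p=2$ and $p=3$ formulas of Lemma~\ref{theorem for +,- new space} in place of the class-number formula. Both $s_k^{\rm \mathbf{(P)}}(p,{\rm VIb})$ (even $k$) and $s_k(p,{\rm VIc})$ (odd $k$) are dimensions of a $\pm$-newspace in weight $2k-2$, so each equals $\tfrac12\dim_{\C} S_{2k-2}^{\rm new}(\Gamma_{0}^{(1)}(p))$ plus the correction term of the lemma; writing $\gamma_p(k)$ for that correction, the hypothesis $k\ge3$ gives $2k-2\ge4$, so $\gamma_2(k)=\tfrac12$ exactly when $2k-2\equiv0,2\pmod8$ and $\gamma_3(k)=\tfrac12$ exactly when $2k-2\equiv0,2,6,8\pmod{12}$, and $\gamma_p(k)=0$ otherwise. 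Substituting into the master identity, multiplying by $2$, and discarding the resulting multiple of $4$, I expect to reach the unified congruence
\begin{equation*}
2\dim_{\C} S_k({\rm K}(p))-2\dim_{\C} S_k(\Gamma_0(p))\equiv (-1)^{k-1}\Big(\dim_{\C} S_{2k-2}^{\rm new}(\Gamma_{0}^{(1)}(p))+2\gamma_p(k)\Big)\pmod4 .
\end{equation*}
Finally I would translate the weight-$(2k-2)$ congruences into congruences on $k$ --- modulo $4$ for $p=2$ and modulo $6$ for $p=3$ --- and read off the four, respectively six, cases in the statement.

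The step I expect to be the main obstacle is the faithful bookkeeping of the sign of the correction term. Unlike the $p\ge5$ case, where the class-number correction $\tfrac14 h(\Delta_p)b$ is independent of $k$ and attaches cleanly to the minus-space for type VIc and to the plus-space for type VIb, here the correction itself is governed by a congruence on $k$, so one must determine, residue class by residue class, which $\pm$-newspace each Saito--Kurokawa type picks out and carry that sign correctly through to the case split. In particular, I would need to verify that for odd $k$ the correction ultimately enters with the \emph{same} sign as the leading term $(-1)^{k-1}\dim_{\C} S_{2k-2}^{\rm new}(\Gamma_{0}^{(1)}(p))$, rather than simply transcribing the $p\ge5$ sign pattern; this interplay between the weight-congruence conditions of Lemma~\ref{theorem for +,- new space} and the parity-dependent functional-equation sign is routine but error-prone, and is where I would concentrate the verification.
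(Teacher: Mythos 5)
Your proposal is correct and follows essentially the same route as the paper's proof: rearrange \eqref{IIIaVIab}, use $s_k^{\rm \mathbf{(Y)}}(p,{\rm VIb})=0$ for $p=2,3$ so that the right-hand side collapses to $s_k(p,{\rm VIc})$ for odd $k$ and $-s_k^{\rm \mathbf{(P)}}(p,{\rm VIb})$ for even $k$, evaluate these via the $p=2,3$ cases of Lemma~\ref{theorem for +,- new space} (together with \cite[(3.10)]{RoySchmidtYi2021}), double, and reduce modulo $4$. Your unified congruence $2\dim_{\C} S_k({\rm K}(p))-2\dim_{\C} S_k(\Gamma_0(p))\equiv(-1)^{k-1}\bigl(\dim_{\C} S_{2k-2}^{\rm new}(\Gamma_{0}^{(1)}(p))+2\gamma_p(k)\bigr)\pmod 4$ reproduces all the stated cases, and the sign issue you flag is resolved exactly as you anticipate (the correction enters with the same sign as the leading term in every residue class).
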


	\begin{proof}
		Since $s_k^{\rm \mathbf{(Y)}}(p,{\rm VIb})=0$ for $p=2, 3$, from \eqref{IIIaVIab} we get
		\begin{equation}\label{IIIa/VI(23)}
			\begin{split}
			&2(s_k(p, {\rm IIIa+VIa/b})+s_k(p,{\rm I})+s_k(p, {\rm IIb}))+\dim_{\C} S_k({\rm K}(p))-\dim_{\C} S_k(\Gamma_0(p))\\
				&=s_k(p, {\rm VIc})- s_k^{\rm \mathbf{(P)}}(p,{\rm VIb}).
			\end{split}
		\end{equation}
		If $k$ is odd, it follows from  Lemma~\ref{theorem for +,- new space} \textcolor{black}{and \cite[(3.10)]{RoySchmidtYi2021}} that
		\begin{align*}
s_k(2, {\rm VIc})=&
\frac{1}{2}\dim_{\C} S_{2k-2}^{{\rm new}}(\Gamma_{0}^{(1)}(2))+\begin{cases} 
				\frac12 & \text{if } 2k-2 \equiv 0 \pmod 8,\\
				0 & \text{else}.
			\end{cases}\\
			=&\frac{1}{2}\dim_{\C} S_{2k-2}^{{\rm new}}(\Gamma_{0}^{(1)}(2))+\begin{cases} 
				\frac12 & \text{if } k \equiv 1 \pmod 4,\\
				0 & \text{if } k \equiv 3 \pmod 4.
			\end{cases}\\
			s_k(3, {\rm VIc})=&
			\frac{1}{2}\dim_{\C} S_{2k-2}^{{\rm new}}(\Gamma_{0}^{(1)}(3))+
			\begin{cases} 
				\frac12 & \text{if } 2k-2 \equiv 0,8 \pmod {12},\\
				0 & \text{else}.
			\end{cases}\\
			=&\frac{1}{2}\dim_{\C} S_{2k-2}^{{\rm new}}(\Gamma_{0}^{(1)}(3))+
			\begin{cases} 
				\frac12 & \text{if } k \equiv 1,5 \pmod {6},\\
				0 & \text{if } k \equiv 3 \pmod 6.
			\end{cases}
		\end{align*}
		If $k$ is even, by Lemma~\ref{theorem for +,- new space} \textcolor{black}{ and \cite[(3.10)]{RoySchmidtYi2021}} we obtain
		\begin{align*}
s_k^{\rm \mathbf{(P)}}(2,{\rm VIb})=&
\frac 12\dim_{\C} S_{2k-2}^{{\rm new}}(\Gamma_{0}^{(1)}(2))+
			\begin{cases} 
				\frac12 & \text{if } 2k-2 \equiv 2 \pmod 8,\\
				0 & \text{else}.
			\end{cases}\\
			=&\frac 12\dim_{\C} S_{2k-2}^{{\rm new}}(\Gamma_{0}^{(1)}(2))+
			\begin{cases} 
				\frac12 & \text{if } k \equiv 2 \pmod 4,\\
				0 & \text{if } k \equiv 0 \pmod 4.
			\end{cases}\\
			s_k^{\rm \mathbf{(P)}}(3,{\rm VIb})=&\frac 12\dim_{\C} S_{2k-2}^{{\rm new}}(\Gamma_{0}^{(1)}(3))+
			\begin{cases} 
				\frac12 & \text{if } 2k-2 \equiv 2,6 \pmod {12},\\
				0 & \text{else}.
			\end{cases}\\
			=&\frac 12\dim_{\C} S_{2k-2}^{{\rm new}}(\Gamma_{0}^{(1)}(3))+
			\begin{cases} 
				\frac12 & \text{if } k \equiv 2,4 \pmod {6},\\
				0 & \text{if } k \equiv 0 \pmod {6}.
			\end{cases}
		\end{align*}
Then we get the following 
			\begin{align*}
				&4(s_k(2,{\rm IIIa+VIa/b})+s_k(2,{\rm I})+s_k(2,{\rm IIb}))+2\dim_{\C} S_k({\rm K}(2))-2\dim_{\C} S_k(\Gamma_0(2))\\
				&=\begin{cases}
						-\dim_{\C} S_{2k-2}^{{\rm new}}(\Gamma_{0}^{(1)}(2)) &\text{if } k \equiv 0\pmod 4,\\[1ex]
						\dim_{\C} S_{2k-2}^{{\rm new}}(\Gamma_{0}^{(1)}(2))+1 &\text{if } k \equiv 1\pmod 4,\\[1ex]
						-\dim_{\C} S_{2k-2}^{{\rm new}}(\Gamma_{0}^{(1)}(2))-1 &\text{if } k \equiv 2\pmod 4,\\[1ex]
						\dim_{\C} S_{2k-2}^{{\rm new}}(\Gamma_{0}^{(1)}(2)) &\text{if } k \equiv 3\pmod 4,
				\end{cases}	
			\end{align*}

and
\begin{align*}
	&4(s_k(3,{\rm IIIa+VIa/b})+s_k(3,{\rm I})+s_k(3,{\rm IIb}))+2\dim_{\C} S_k({\rm K}(3))-2\dim_{\C} S_k(\Gamma_0(3))\\
	&=\begin{cases}
		-\dim_{\C} S_{2k-2}^{{\rm new}}(\Gamma_{0}^{(1)}(3)) &\text{if } k \equiv 0\hspace{2ex}\pmod 6,\\[1ex]
		\dim_{\C} S_{2k-2}^{{\rm new}}(\Gamma_{0}^{(1)}(3))+1 &\text{if } k \equiv 1,5\pmod 6,\\[1ex]
		-\dim_{\C} S_{2k-2}^{{\rm new}}(\Gamma_{0}^{(1)}(3))-1 &\text{if } k \equiv 2,4\pmod 6,\\[1ex]
		\dim_{\C} S_{2k-2}^{{\rm new}}(\Gamma_{0}^{(1)}(3)) &\text{if } k \equiv 3\hspace{2ex}\pmod 6.
	\end{cases}	
\end{align*}
		Hence the stated congruences follow from above equations.
	\end{proof}
	\section{Congruences for \texorpdfstring{$4$}{}-core partition functions}\label{Section 4core}
	A partition of a positive integer $n$ is any non-increasing sequence of positive integers  whose sum is $n$. If $\pi=\pi_1+\pi_2+\cdots +\pi_m$ be a partition of $n$, where $\pi_1\geq \pi_2 \geq \cdots \geq \pi_m$, then the
	Ferrers–Young diagram of $\pi$ is an array of nodes with $\pi_p$ nodes in the $p$-th row.
	
	\begin{align*}
		&\bullet~~\bullet~~\cdots~~\bullet~~~\bullet & \pi_1~\text{nodes}\\
		&\bullet~~\bullet~~\cdots~~\bullet & \pi_2~\text{nodes}\\
		&~\vdots &~\vdots\\
		&\bullet~~\cdots~~\bullet & \pi_m~\text{nodes}
	\end{align*}

	The $(p,q)$-hook is the set of nodes directly below and directly to the right of the $(p, q)$-node, as well as the $(p, q)$-node.
	The hook number, $H(p,q)$, is the total number of nodes on the $(p,q)$-hook. For a positive integer $t$, a $t$-core partition of $n$ is a partition of $n$ in which none of the hook numbers are divisible by $t$. Suppose $c_t(n)$ denote the number of $t$-core partitions of $n$.
	Now we illustrate the Ferrers–Young diagram of the partition $5 + 4 + 2$ of $11$ with the corresponding  hook numbers shown in the graph.
	\begin{align*}
		&\bullet^7~~\bullet^6~~\bullet^4~~\bullet^3~~\bullet^1\\
		&\bullet^5~~\bullet^4~~\bullet^2~~\bullet^1\\ &\bullet^2~~\bullet^1
	\end{align*}
	Therefore it is clear that if $t>7$ then $5+4+2$ is a $t$-core partition of $11$. The generating function for $c_t(n)$ is
	\begin{align*}
		\sum_{n=0}^{\infty}c_t(n)q^n:= \prod_{n=1}^{\infty}\frac{(1-q^{tn})^t}{(1-q^n)}.
	\end{align*}
	In this article we focus on $4$-core partitions. Hirschhorn and Sellers \cite{Hirschhorn1996EJC}, proved that 
	\begin{align*}
		c_{4}\left(9n+2\right)&\equiv0 \pmod{2},\\
		c_{4}\left(9n+8\right)&\equiv0 \pmod{2}.
	\end{align*}
	Analyzing the
	action of the Hecke operators on the space of integer weight cusp forms Boylan \cite{Boylan2002} proved that 
	\begin{align*}
		c_{4}\left(Bn-5\right)&\equiv0 \pmod{2},
	\end{align*}
	where $B$ is product of any five distinct odd primes. Regarding the positivity of $4$-core partition, Ono \cite{Ono1995} proved that $c_4(n)>0$ for all $n\geq 0.$
	
	It is well known that the number of $4$-core  partitions $c_4(n)$ of $n$ is equal to the number of representations of $8n+5$ in the form $x^2+2y^2+2z^2$ with $x,y,z$ odd  positive integers.
	Let $H(D)$ denote the Hurwitz class numbers of binary quadratic forms of discriminant $D<0$. There is relationship between the Hurwitz class numbers and the class number of imaginary quadratic fields. Let $h(\Delta_n)$ be the class number for imaginary quadratic fields $\Q(\sqrt{-n})$, where $\Delta_n<0$ is the discriminant of $\Q(\sqrt{-n})$. Here $\Delta_n$ is a fundamental discriminant given by
	$$\Delta_n=\begin{cases}
		-4n& \text{if $n \equiv 1,2 \pmod {4},$} \\
		-n& \text{if $n \equiv 3 \pmod {4}$}. \\
	\end{cases}$$
	Suppose $D=\Delta_nf^2$ where $\Delta_n<0$ is a fundamental discriminant for a squarefree integer $n$. Then we have 
	\begin{equation*}
		H(D)=\frac{2h(\Delta_n)}{w(\Delta_n)}\sum_{d\mid f}\mu(d)\left(\frac{\Delta_n}{d}\right)\sigma_1(f/d),
	\end{equation*}
	where 
	$w(\Delta_n)$ is the number
	of units in the ring of integers of $\Q(\sqrt{-n})$, $\mu(n)$ is the M\"obius function and $\sigma_1(n)$ is the sum of the
	divisors of $n$. In particular, when $D<0$ is a fundamental discriminant and $D\neq 3,4$ then we have $H(D)=h(D)$. There is very interesting connection between $4$-core partition functions and the Hurwitz class numbers.
	
	In 1997, Ono and Sze \cite{Ono1997} showed that $4$-core  partitions naturally arise in algebraic number theory. In particular they proved that, if $8n + 5$ is square-free integer, then
	\begin{align}	
		\label{c4n-classnumber}
		c_4(n)&=\frac{1}{2}h(-32n-20).	
	\end{align}
	Note that 
	by \emph{Dirichlet's Theorem on primes in arithmetic progression}, there are infinitely many primes of the form $8n+5$. 
	Next, we prove a congruence for $c_4(n)$ using the results from the previous section.
	\begin{corollary}
	    \label{Theorem for C4n and Smf}
		Suppose $8n+5$ is a prime number and $k\ge 5$. Then, modulo $4$, we have
		\begin{equation*}
			\begin{split}
				&(-1)^{k-1}c_4(n)^2-\begin{cases}2c_4(n)&\text{if } k \text{ is odd}\\0&\text{if } k \text{ is even}\end{cases}-2\Big(\dim_{\C} S_k({\rm K}(8n+5))-\dim_{\C} S_k(\Gamma_0(8n+5))\Big)\\
				&\equiv \left(\dim_{\C} S_{2k-2}^{\rm new}(\Gamma_{0}^{(1)}(8n+5))\dim_{\C} S_{2}^{\rm new}(\Gamma_{0}^{(1)}(8n+5))-(-1)^{k-1}\dim_{\C} S_{2k-2}^{\rm new}(\Gamma_{0}^{(1)}(8n+5))\right)\\		
				&\text{ and }	\\	
				&	(-1)^{k}c_4(n)^2-2\Big(\dim_{\C} S_k({\rm K}(8n+5))-\dim_{\C} S_k(\Gamma_0(8n+5))\Big)\\
				& \equiv -\left(\dim_{\C} S_{2k-2}^{\rm new}(\Gamma_{0}^{(1)}(8n+5))\dim_{\C} S_{2}^{\rm new}(\Gamma_{0}^{(1)}(8n+5))+\dim_{\C} S_{2k-2}^{\rm new}(\Gamma_{0}^{(1)}(8n+5))\right).
			\end{split}
		\end{equation*}
	\end{corollary}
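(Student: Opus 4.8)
The plan is to specialize the two congruences modulo $16$ in Theorem~\ref{Proposition of IIIa/VI p geq 5} and Theorem~\ref{Proposition of Va p geq 5} to the prime $p=8n+5$, and to rewrite the class number $h(\Delta_p)$ in terms of $c_4(n)$ through the Ono--Sze formula \eqref{c4n-classnumber}.

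First I would pin down the residue class and the discriminant. Since $p=8n+5\equiv 5\pmod 8$, we have $p\equiv 1\pmod 4$, so $p$ lands in case~(i) of both theorems; moreover $p\ge 5$ and $k\ge 5>3$, so the hypotheses of both theorems are satisfied. Because $p\equiv 1\pmod 4$, the fundamental discriminant of $\Q(\sqrt{-p})$ is
\[
\Delta_p=-4p=-32n-20 .
\]
As $8n+5$ is prime it is in particular square-free, so \eqref{c4n-classnumber} applies verbatim and yields $c_4(n)=\tfrac12 h(-32n-20)=\tfrac12 h(\Delta_p)$, that is, $h(\Delta_p)=2\,c_4(n)$.

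Next I would substitute $h(\Delta_p)=2c_4(n)$ into case~(i) of each theorem. In Theorem~\ref{Proposition of IIIa/VI p geq 5}(i) the class-number terms become $(-1)^{k-1}h(\Delta_p)^2=4(-1)^{k-1}c_4(n)^2$ and $4h(\Delta_p)=8c_4(n)$, while the right-hand side already carries explicit factors $4$ and $8$; hence every term on both sides is divisible by $4$. Dividing the entire congruence by $4$ therefore converts the statement modulo $16$ into a valid statement modulo $4$, and transferring the term $2\big(\dim_{\C} S_k({\rm K}(p))-\dim_{\C} S_k(\Gamma_0(p))\big)$ to the left reproduces the first asserted congruence. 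Running the identical substitution and division through Theorem~\ref{Proposition of Va p geq 5}(i), where $(-1)^{k}h(\Delta_p)^2=4(-1)^{k}c_4(n)^2$, produces the second congruence but with the combination $\dim_{\C} S_k({\rm K}(p))+\dim_{\C} S_k(\Gamma_0(p))$ on the left; since $4\dim_{\C} S_k(\Gamma_0(p))\equiv 0\pmod 4$, the sign in front of $\dim_{\C} S_k(\Gamma_0(p))$ is immaterial modulo $4$ and may be flipped to match the stated form.

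There is no genuine obstacle here: the whole argument rests on the single identification $h(\Delta_p)=2c_4(n)$, whose only subtle point is the arithmetic observation that $p\equiv 1\pmod 4$ forces $\Delta_p=-4p$ rather than $-p$, so that $-32n-20$ is exactly the fundamental discriminant appearing in \eqref{c4n-classnumber}. The remaining care is purely bookkeeping: verifying that all terms of the mod-$16$ congruences are divisible by $4$ so that the division is legitimate, and recording the harmless modulo-$4$ flip of the sign on $\dim_{\C} S_k(\Gamma_0(p))$.
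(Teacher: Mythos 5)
Your proposal is correct and follows the paper's own argument: specialize part (i) of Theorem~\ref{Proposition of IIIa/VI p geq 5} and Theorem~\ref{Proposition of Va p geq 5} to $p=8n+5\equiv 1\pmod 4$, use the Ono--Sze identity \eqref{c4n-classnumber} to get $h(\Delta_p)=h(-32n-20)=2c_4(n)$, and divide the mod-$16$ congruences by $4$. The extra details you supply (divisibility of every term by $4$, and the harmless sign flip on $\dim_{\C} S_k(\Gamma_0(p))$ modulo $4$) are exactly the bookkeeping the paper leaves implicit.
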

	\begin{proof}
		Let $p$ be a prime of the form $8n+5$. \textcolor{black}{In particular,} $p \equiv 1 \pmod{4}$.  It follows from \eqref{c4n-classnumber} that the class number of $\Q(\sqrt{-p})$ is given by $h(\Delta_p)=h(-32n-20)=2c_4(n)$. Then the first congruence
		follows from part (i) of Theorem~\ref{Proposition of IIIa/VI p geq 5} and the second congruence follows from  part (i) of Theorem~\ref{Proposition of Va p geq 5}.
	\end{proof}
\noindent Finally, by adding two different congruences in Corollary~\ref{Theorem for C4n and Smf} when $k$ is odd, we get the following result.	
	\begin{corollary}\label{coro4.2}
		Suppose $8n+5$ is a prime number and $k$ is a positive integer. Then 
		\begin{equation*}
			\begin{split}
				c_4(n) \equiv \dim_{\C} S_{4k}^{\rm new}(\Gamma_{0}^{(1)}(8n+5)) \pmod{2}.
			\end{split}
		\end{equation*}
	\end{corollary}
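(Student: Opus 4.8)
The plan is to follow the hint and, after specializing to $p=8n+5$, add the two congruences of Corollary~\ref{Theorem for C4n and Smf} in the case $k$ odd. First I would record the two arithmetic inputs. Since $8n+5\equiv 5\pmod 8$ we have $p\equiv 1\pmod 4$, so only part (i) of each of Theorem~\ref{Proposition of IIIa/VI p geq 5} and Theorem~\ref{Proposition of Va p geq 5} is relevant; and the Ono--Sze identity \eqref{c4n-classnumber} gives $h(\Delta_p)=2c_4(n)$, which is precisely what has already been used to phrase Corollary~\ref{Theorem for C4n and Smf} in terms of $c_4(n)$.

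Next I would take $k$ odd, so that $(-1)^{k-1}=1$ and $(-1)^{k}=-1$, and add the two displayed congruences of Corollary~\ref{Theorem for C4n and Smf}. The point of choosing $k$ odd is that the quadratic terms then appear with opposite signs: the first congruence contributes $+c_4(n)^2$ and the second $-c_4(n)^2$, and the mixed products $\dim_{\C} S_{2k-2}^{\rm new}(\Gamma_{0}^{(1)}(p))\dim_{\C} S_{2}^{\rm new}(\Gamma_{0}^{(1)}(p))$ cancel in the same way. The two copies of $-2\big(\dim_{\C}S_k({\rm K}(p))-\dim_{\C}S_k(\Gamma_0(p))\big)$ add to a multiple of $4$ and hence vanish modulo $4$. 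What survives is the linear term $-2c_4(n)$ on one side and $-2\dim_{\C} S_{2k-2}^{\rm new}(\Gamma_{0}^{(1)}(p))$ on the other, i.e.\ $2c_4(n)\equiv 2\dim_{\C} S_{2k-2}^{\rm new}(\Gamma_{0}^{(1)}(p))\pmod 4$; dividing by $2$ gives $c_4(n)\equiv \dim_{\C} S_{2k-2}^{\rm new}(\Gamma_{0}^{(1)}(p))\pmod 2$ for every odd $k$.

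Finally I would reindex. For $k$ odd the weight $2k-2$ is a multiple of $4$; writing $2k-2=4j$, i.e.\ $k=2j+1$, the parameter $j$ runs over all positive integers as $k$ runs over the odd integers $\ge 3$. This turns the congruence into $c_4(n)\equiv \dim_{\C} S_{4j}^{\rm new}(\Gamma_{0}^{(1)}(8n+5))\pmod 2$ for all $j\ge 1$, which (renaming $j$ as the variable $k$ of the statement) is exactly the asserted congruence. As stated, Corollary~\ref{Theorem for C4n and Smf} requires $k\ge 5$ and therefore directly yields the cases $j\ge 2$ (weights $8,12,16,\dots$); to fill in the single boundary case $j=1$ (weight $4$, corresponding to $k=3$) I would run the identical addition at $k=3$ using parts (i) of Theorem~\ref{Proposition of IIIa/VI p geq 5} and Theorem~\ref{Proposition of Va p geq 5} directly, which is permissible because those theorems hold for all $k\ge 3$.

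The computation is otherwise routine; the two points that need care are verifying that the paramodular and Siegel dimension terms $\dim_{\C}S_k({\rm K}(p))$ and $\dim_{\C}S_k(\Gamma_0(p))$ genuinely drop out modulo $4$ after the addition, and checking that the reindexing $2k-2=4j$ does reach the weight-$4$ case. I expect the latter to be the only real obstacle, since it is the one spot where one must step outside the range $k\ge 5$ of Corollary~\ref{Theorem for C4n and Smf} and appeal to the underlying theorems instead.
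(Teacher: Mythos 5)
Your proposal is correct and is essentially the paper's own argument: the paper likewise obtains the congruence by adding the two displayed congruences of Corollary~\ref{Theorem for C4n and Smf} for odd $k$, so that the quadratic and mixed-product terms cancel and only $2c_4(n)\equiv 2\dim_{\C}S_{2k-2}^{\rm new}(\Gamma_0^{(1)}(8n+5))\pmod 4$ survives. Your extra step of handling the weight-$4$ case ($k=3$) by going back to Theorems~\ref{Proposition of IIIa/VI p geq 5} and~\ref{Proposition of Va p geq 5}, which hold for all $k\ge 3$, is a point the paper silently glosses over (its Corollary~\ref{Theorem for C4n and Smf} is stated only for $k\ge 5$), and is a worthwhile addition.
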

	
\thanks{\textbf{Acknowledgements.}} We would like to thank Ken Ono for helpful advice.	The first author has carried out this work at Harish-Chandra Research Institute, affiliated with Homi Bhabha National Institute (Department of Atomic Energy, India), as a Postdoctoral Fellow. We would also like to thank the referee for the detailed comments and suggestions.
	
	\bibliographystyle{plain}
	\bibliography{CongruenceDimensionSiegel.bib}
	
\end{document}